\documentclass{article}
\usepackage{amsmath, amsthm, amssymb,color}
\usepackage{color}
\usepackage[dvips]{graphicx}
\usepackage{enumerate}
\topmargin = -.5in
\oddsidemargin = -0.1in
\textheight = 8.5in
\textwidth = 6.2in

\def\authorsPS{}
\newcommand\authPS[8]{\ifnum\authPSc<1\def\authPSc{1}\else, \fi{\large\sffamily #1 #2}%
\edef\authorsPS{\authorsPS \par\vskip 1mm\noindent #1 #2:\hskip 5mm #3, #4, #5, #6, #7, #8}}

\newtheorem{theorem}{Theorem}%[section]
\newtheorem{lemma}[theorem]{Lemma}

\theoremstyle{definition}

\theoremstyle{remark}

\pagestyle{myheadings}
\thispagestyle{empty}
\def\authPSc{0}
\newcommand{\beq}[1]{%\marginpar{\footnotesize\sf #1}%
\begin{equation}\label{#1}}
\newcommand{\eeq}{\end{equation}}
\newcommand{\req}[1]{{\rm(\ref{#1})}}
\newcommand{\hten}{{\mathfrak H}}

\newcommand{\Nt}{\lfloor nt \rfloor}
\newcommand{\Ns}{\lfloor ns \rfloor}
\newcommand{\DXj}{\Delta X_{\frac jn}}
\numberwithin{equation}{section}

\begin{document}

% !!!!!!!!!!!!!!!!!!!!!!!!!!!!!!!!!!!!!!!!!!!!!!!!!!!!!!!!!!!!!!!!!!!!!!!
% !!!                   START HERE                                   !!!
% !!!!!!!!!!!!!!!!!!!!!!!!!!!!!!!!!!!!!!!!!!!!!!!!!!!!!!!!!!!!!!!!!!!!!!!

%% !!  FULL TITLE OF PAPER
% example :

\title{Decomposition and limit theorems for a class of self-similar Gaussian processes}

\author{Daniel Harnett\thanks{
Department of Mathematical Sciences, University of Wisconsin Stevens Point \newline Stevens Point, Wisconsin 54481, dharnett@uwsp.edu}
\, and David Nualart\thanks{  
Department of Mathematics, University of Kansas\newline  
\; 405 Snow Hall, Lawrence, Kansas 66045-2142, nualart@ku.edu \newline D. Nualart is supported by NSF grant DMS1512891 and the ARO grant FED0070445 \newline
 {\bf Keywords}:  fractional Brownian motion, self-similar processes, stochastic heat equation, Hermite variations.  \newline
{\bf AMS 2010 Classification}:  60F05, 60G18, 60H07}  
}

\maketitle

\begin{abstract}
We introduce a new class of self-similar Gaussian stochastic processes, where the covariance is defined in terms of a fractional Brownian motion and another Gaussian process.  A special case is the solution in time to the fractional-colored stochastic heat equation described in Tudor (2013).  We prove that the process can be decomposed into a fractional Brownian motion (with a different parameter than the one that defines the covariance), and a Gaussian process first described in Lei and Nualart (2008).  The component processes can be expressed as stochastic integrals with respect to the Brownian sheet.  We then prove a central limit theorem about the Hermite variations of the process.
 \end{abstract}

\large
\section{Introduction}
 The purpose of this paper is to introduce a new class of Gaussian self-similar stochastic processes related to stochastic partial differential equations, and to establish a decomposition in law and  a central limit theorem for the Hermite variations of the increments of such processes. 
 
 Consider the $d$-dimensional stochastic  heat equation
 \begin{equation} \label{SHE}
\frac{\partial u}{\partial t} = \frac 12\Delta u + \dot{W},\; t\ge 0,\; x\in \mathbb{R}^d,
\end{equation} 
 with zero initial condition, where $\dot{W}$ is a zero mean Gaussian field with a covariance of the form
 \[ 
\mathbb{E}\left[\dot{W}^H(t,x)\dot{W}^H(s,y)\right] =  \gamma_0(t-s) \Lambda(x-y), \quad s,t \ge 0 ,\; x,y \in \mathbb{R}^d.
\]
We are interested in the the process  $U=\{ U_t, t\ge 0\}$, where  $U_t= u(t,0)$.

Suppose that    $\dot{W}$ is white in time, that is,  $\gamma_0 =\delta_0$ and the spatial covariance is the Riesz kernel, that is,
$\Lambda(x)=c_{d,\beta }  |x|^{-\beta}$, with $\beta<\min(d,2)$ and $c_{d,\beta} = \pi^{-d/2} 2^{\beta-d} 
\Gamma(\beta /2) / \Gamma((d-\beta)/2)$. Then $U$ has the covariance (see \cite{Tu_Xiao})
\begin{equation}  \label{cov1}
\mathbb{E} [ U_tU_s]=  D
 \left( (t+s)^{1-\frac \beta 2}  -|t-s|^{1-\frac  \beta 2} \right), \quad s,t \ge 0,
\end{equation}
for some constant  
\begin{equation}  \label{D}
D= (2\pi )^{-d}  (1- \beta /2)^{-1} \int_{\mathbb{R}^d} e^{-\frac{|\xi|^2}{2}}  \frac {d\xi}  {|\xi|^{d-\beta}}.
\end{equation}
Up to a constant, the covariance (\ref{cov1})  is the covariance of the {\it bifractional Brownian motion} with parameters $H= \frac 12$ and $K = 1-\frac \beta 2$.   We recall that, given constants $H\in (0,1)$ and $K\in (0,1)$, the bifractional Brownian motion
$B^{H,K} =\{ B^{H,K}_t, t\ge 0\}$,   introduced in 
\cite{HoVi},   is a centered Gaussian process with covariance
\[
R_{H,K}(s,t)= \frac 1{2^K}\left(( t^{2H} + s^{2H})^K - |t-s|^{2HK} \right),  \quad s,t \ge 0.
\]
When $K=1$, the process $B^H=B^{H,1}$ is simply the fractional Brownian motion (fBm)  with Hurst parameter $H\in (0,1)$, with covariance $R_H(s,t)= R_{H,1}(s,t)$. In \cite{Lei},  Lei and Nualart obtained the following  decomposition in law for the bifractional Brownian motion
\[
B^{H,K}= C_1 B^{HK} + C_2 Y^K_{t^{2H}},
\]
where $B^{HK}$ is a fBm   with Hurst parameter $HK$, the process $Y^K$ is given by    
\begin{equation}  \label{y}
Y^K_t = \int_0^\infty y^{-\frac{1+K} 2} (1-e^{-yt}) dW_y,
\end{equation}
  with $W=\{W_y, y\ge0 \}$
 a standard Brownian motion independent of $B^{H,K}$, and   $C_1, C_2$ are constants given by $C_1=2^{\frac{1-K} 2}$ and
$C_2=\sqrt{ \frac {2^{-K}}{\Gamma(1-K)}}$.  The process $Y^K$ has trajectories which are infinitely differentiable on $(0,\infty)$ and H\"older continuous of order $HK-\epsilon$ in any interval $[0,T]$ for any $\epsilon>0$. 
In particular, this leads to a decomposition in law of the process $U$ with covariance (\ref{cov1}) as the sum of a fractional Brownian motion with Hurst parameter $\frac 12-\frac \beta 4$ plus a regular process.  

The classical  one-dimensional space-time white noise can also be considered as an extension of the covariance (\ref{cov1}) if we take $\beta=1$. In this case the covariance corresponds, up to a constant, to that of a bifractional Brownian motion with  parameters $H=K=\frac 12$. 

The case where     the noise term $\dot{W}$ is a fractional Brownian motion with Hurst parameter $H\in(\frac 12, 1)$ in time and  a spatial covariance  given by the Riesz kernel, that is,
\[ 
\mathbb{E}\left[\dot{W}^H(t,x)\dot{W}^H(s,y)\right] = \alpha_H c_{d,\beta}  |s-t|^{2H-2}|x-y|^{-\beta},
\]
where $0< \beta < \min( d,2)$ and $\alpha_H =H(2H-1)$,  has  been considered  by  Tudor and Xiao in \cite{Tu_Xiao}.  In this case the corresponding process $U$ has the covariance
\begin{equation}  \label{cov2}
\mathbb{E}[ U_tU_s] = D\alpha_H \int_0^t \int_0^s  |u-v|^{2H-2}(t+s-u-v)^{-\gamma}dudv.
\end{equation}
where $D$ is given in (\ref{D}) and  $\gamma=\frac {d-\beta} 2$. This process is self-similar with parameter 
$H- \frac \gamma 2$ and it
  has been studied in a series of papers  \cite{Balan, Ouahhabi, TTV, Tudor, Tu_Xiao}. In particular, in 
\cite{Tu_Xiao}  it is proved that the process $U$ can be decomposed into the sum of a scaled fBm with parameter $H-\frac{\gamma}{2}$, and a Gaussian process $V$ with continuously differentiable trajectories. 
This decomposition is based on the  stochastic heat equation. As a consequence, one can derive the exact uniform and local moduli of continuity and Chung-type laws of  the iterated logarithm for this process.
  In  \cite{TTV}, assuming that $d=1,2$ or $3$, a central limit theorem is obtained for the
    renormalized quadratic variation
 \[ 
 V_n =n^{2H-\gamma -\frac 12}  \sum_{j=0}^{n-1}  \left\{(U_{ (j+1)T/n}-U_{jT/n} )^2 - \mathbb{E}\left[(U_{(j+1)t/n}-U_{jT/n})^2\right]\right\},
\] 
assuming  $\frac 12 < H < \frac 34$, extending well-known results for fBm  (see for example \cite[Theorem 7.4.1]{NoP11}). 
  
  The purpose of this paper is to   establish a decomposition in law, similar to that obtained by Lei and Nualart in \cite{Lei} for the bifractional Brownian motion,  and  a central limit theorem for  the Hermite variations of the increments, for a class of  self-similar processes  that includes the covariance  (\ref{cov2}).
 Consider a centered Gaussian process
 $\{X_t, t\ge 0\}$ with covariance
 \begin{equation} \label{eq1}
 R(s,t) =\mathbb{E} [X_sX_t] = \mathbb{E}  \left[ \left(\int_0^t Z_{t-r} dB^H_r \right)\left( \int_0^sZ_{s-r} dB^H_r   \right)\right],
\end{equation}
where  
\begin{itemize}
\item[(i)] $B^H=\{B^H_t, t\ge 0\}$ is a  fBm  with Hurst parameter $H\in (0,1)$.  
\item[(ii)] $Z=\{Z_t, t> 0\}$ is a zero-mean Gaussian process, independent of $B^H$, with covariance
 \begin{equation} \label{eq2}
\mathbb{E} [Z_sZ _t] =(s+t)^{-\gamma},
\end{equation}
where $0<\gamma <2H$.
\end{itemize}
In other words, $X$ is a Gaussian process with the same covariance as the process $\{\int_0^t Z_{t-r} dB^H_r, t\ge 0\}$, which is not Gaussian. 

\medskip
When $H\in (\frac 12, 1)$, the covariance (\ref{eq1})  coincides with (\ref{cov2}) with $D=1$. 
 However, we allow the range of parameters $0 < H < 1$ and $0 < \gamma < 2H$.  In other words, up to a constant, $X$ has the law of the solution in time of the stochastic heat equation (\ref{SHE}), when $H\in (0,1)$ and  $d\ge 1$ and $\beta=d-2\gamma$. 
 Also of interest is that $X$ can be constructed as a sum of stochastic integrals with respect to the Brownian sheet (see the proof of Theorem 1).

\medskip
\subsection{Decomposition of the process $X$}
Our first result is the following decomposition in law of the process $X$ as the sum of a fractional Brownian motion with Hurst parameter $\frac{\alpha}{2}= H-\frac{\gamma}{2}$ plus a process with regular trajectories.
\begin{theorem} \label{thm1}
The process $X$ has the same law as
$\{\sqrt{\kappa}   B^{ \frac \alpha 2}_t+ Y_t, t\ge 0\}$, 
where 
\begin{equation} \label{kappa}
 \kappa =  \frac{1}{\Gamma(\gamma)}\int_0^\infty \frac{z^{\gamma-1}}{1+z^2}~dz,
\end{equation}
 $B^{\frac \alpha 2}$ is a fBm with Hurst parameter $\alpha/2$, and $Y$  (up to a constant)  has the same law as the  process $Y^K$ defined in (\ref{y}), with $K=2\alpha +1$,
 that is, $Y$ is a centered Gaussian process with covariance given by
 \[ \mathbb{E}\left[ Y_t Y_s\right] = \lambda_1\int_0^\infty y^{-\alpha-1}(1 - e^{-yt})(1-e^{-ys})~dy,\]
 where
 \[\lambda_1 = \frac{4\pi}{\Gamma(\gamma)\Gamma(2H+1)\sin(\pi H)}\int_0^\infty \frac{\eta^{1-2H}}{1+\eta^2}d\eta.\]
\end{theorem}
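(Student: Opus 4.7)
My plan is to realize $X$ explicitly as a stochastic integral with respect to a Brownian sheet, and then to read off the desired covariance decomposition by Fourier analysis. The starting point is the elementary integral representation
\[
(s+t)^{-\gamma}=\frac{1}{\Gamma(\gamma)}\int_0^\infty y^{\gamma-1}e^{-y(s+t)}\,dy,
\]
from which one sees that $\{Z_t, t>0\}$ has the same covariance as the process $Z_t=\frac{1}{\sqrt{\Gamma(\gamma)}}\int_0^\infty y^{(\gamma-1)/2}e^{-yt}\,dW_y$, where $W$ is a standard Brownian motion on $[0,\infty)$ independent of $B^H$. A stochastic Fubini argument then gives, in law,
\[
\int_0^t Z_{t-r}\,dB^H_r=\frac{1}{\sqrt{\Gamma(\gamma)}}\int_0^\infty y^{(\gamma-1)/2}\,\xi^H_y(t)\,dW_y,\qquad \xi^H_y(t):=\int_0^t e^{-y(t-r)}\,dB^H_r,
\]
so $X$ inherits the same second-order law.

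I would next use the harmonizable representation of fBm, $B^H_t=c_H\int_{\mathbb{R}}\frac{e^{it\xi}-1}{i\xi}|\xi|^{1/2-H}\,d\widehat W(\xi)$ (with $\widehat W$ an independent Hermitian complex white noise), to compute
\[
\xi^H_y(t)=c_H\int_{\mathbb{R}}\frac{e^{it\xi}-e^{-yt}}{y+i\xi}\,|\xi|^{1/2-H}\,d\widehat W(\xi).
\]
This displays $X$, up to a constant, as a double Wiener integral with respect to $dW\otimes d\widehat W$, i.e.\ against a Brownian sheet on $(0,\infty)\times\mathbb{R}$, which also justifies the construction mentioned in the paragraph preceding Theorem \ref{thm1}. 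The $L^2$-isometry of the double Wiener integral then produces the closed form
\[
R(s,t)=\frac{|c_H|^2}{\Gamma(\gamma)}\int_0^\infty\!\!\int_{\mathbb{R}}\frac{y^{\gamma-1}\,(e^{it\xi}-e^{-yt})(e^{-is\xi}-e^{-ys})}{y^2+\xi^2}\,|\xi|^{1-2H}\,d\xi\,dy.
\]

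Splitting $e^{it\xi}-e^{-yt}=(e^{it\xi}-1)+(1-e^{-yt})$ in both factors produces four pieces, two of which give the desired contributions. In the piece $(e^{it\xi}-1)(e^{-is\xi}-1)$, integrating $y$ out first via the substitution $y=|\xi|z$ produces $|\xi|^{\gamma-2}\Gamma(\gamma)\kappa$, and the remaining $\xi$-integral is the classical spectral formula for the covariance of fBm of index $\alpha/2=H-\gamma/2$, yielding $\kappa R_{\alpha/2}(s,t)$ after matching constants. In the piece $(1-e^{-yt})(1-e^{-ys})$, integrating $\xi$ out first via $\xi=y\eta$ produces $2y^{-2H}\int_0^\infty \frac{\eta^{1-2H}}{1+\eta^2}\,d\eta$, and the remaining $y$-integral with weight $y^{\gamma-1-2H}=y^{-\alpha-1}$ is precisely the covariance of $Y$ with the constant $\lambda_1$ stated in the theorem.

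The main technical obstacle I foresee is the treatment of the two mixed pieces $(e^{it\xi}-1)(1-e^{-ys})$ and $(1-e^{-yt})(e^{-is\xi}-1)$, whose $\xi$-integrals do not vanish by parity alone (the real part $(\cos(t\xi)-1)|\xi|^{1-2H}/(y^2+\xi^2)$ survives). One must perform a careful Fourier/Parseval calculation to show that the sum of these cross contributions rearranges, together with a reorganisation of the diagonal pieces, so as to give exactly $\kappa R_{\alpha/2}(s,t)+\mathbb{E}[Y_tY_s]$; this is where the gamma reflection formulas implicit in $\kappa$ and $\lambda_1$ will enter. Once the covariance identity is established, the theorem follows immediately because $X$ and $\sqrt{\kappa}\,B^{\alpha/2}+Y$ are centred Gaussian processes with matching covariance.
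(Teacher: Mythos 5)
Your derivation of the double spectral representation of $R(s,t)$ is essentially the paper's: they too write $(s+t)^{-\gamma}$ as a Gamma-integral, apply the spectral-domain inner product of fBm, and arrive at
$R(s,t)=\frac{1}{\Gamma(\gamma)C_H^2}\int_0^\infty\int_{\mathbb{R}} y^{\gamma-1}\frac{|\xi|^{1-2H}}{y^2+\xi^2}\left(e^{i\xi t}-e^{-yt}\right)\left(e^{-i\xi s}-e^{-ys}\right)d\xi\,dy$, and your identification of the pieces $(e^{it\xi}-1)(e^{-is\xi}-1)$ and $(1-e^{-yt})(1-e^{-ys})$ with $\kappa R_{\alpha/2}$ and $\mathbb{E}[Y_tY_s]$ matches steps (I) and (II) of their proof.

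The gap is in your last step. The two mixed pieces neither vanish nor can they be ``rearranged, together with a reorganisation of the diagonal pieces,'' into $\kappa R_{\alpha/2}(s,t)+\mathbb{E}[Y_tY_s]$: after the substitution $\xi=\eta y$, the surviving real part of each mixed piece is a positive constant times $\int_0^\infty\int_0^\infty y^{-\alpha-1}\frac{\eta^{1-2H}}{1+\eta^2}\left(\cos(\eta y t)-1\right)\left(1-e^{-ys}\right)d\eta\,dy$, whose integrand is nonpositive and not identically zero, so the cross contribution is strictly negative for $s,t>0$. Hence the covariance identity $R(s,t)=\kappa R_{\alpha/2}(s,t)+\mathbb{E}[Y_tY_s]$ that your closing argument relies on is false; that argument implicitly assumes $B^{\alpha/2}$ and $Y$ are independent, and that is exactly what fails. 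The paper's resolution is to make the decomposition genuinely dependent: it defines $U$ (kernel $\cos(\eta yt)-1$) and $Y$ (kernel $1-e^{-yt}$) as Wiener--It\^o integrals against the \emph{same} Brownian sheet $B^{(1)}$, and $V$ (kernel $\sin(\eta yt)$) against an independent sheet $B^{(2)}$. Then $W=U+V$ is $\sqrt{\kappa}$ times an fBm with Hurst index $\alpha/2$, $Y$ has the stated covariance, and the cross-covariances $\mathbb{E}[U_tY_s]+\mathbb{E}[U_sY_t]$ supply precisely the two mixed pieces, so that $X=W+Y$ has covariance $R$. To repair your proof, replace the hoped-for cancellation by such a joint construction; your double Wiener integral against $dW\otimes d\widehat{W}$ is already the right object --- split the kernel $\frac{e^{it\xi}-e^{-yt}}{y+i\xi}\,|\xi|^{1/2-H}$ into its three parts and keep track of which parts are driven by the same noise, rather than trying to match marginal covariances alone.
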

The proof of this theorem is given in Section 3.

\subsection{Hermite variations of the process}
For each integer $q \ge 0$, the $q$th Hermite polynomial is given by
\[ H_q(x) = (-1)^q e^{\frac{x^2}{2}} \frac{d^q}{dx^q} e^{-\frac{x^2}{2}}.\]
See \cite[Section 1.4]{NoP11} for a discussion of properties of these polynomials.  In particular, it is well known that the family $\{\frac{1}{q!} H_q, q\ge 0\}$ constitutes an orthonormal basis of the space $L^2(\mathbb{R},\gamma)$, where $\gamma$ is the $N(0,1)$ measure.

Suppose $\{ Z_n, n \ge 1\}$ is a stationary, Gaussian sequence, where each $Z_n$ follows the $N(0,1)$ distribution with covariance function $\rho(k) = \mathbb{E}\left[ Z_n Z_{n+k}\right]$.  If $\sum_{k=1}^\infty |\rho(k)|^q < \infty$, it is well known that as $n$ tends to infinity, the Hermite variation
\beq{Breuer_Major_classic} V_n = \frac{1}{\sqrt n} \sum_{j=1}^n H_q(Z_j)\eeq
converges in distribution to a Gaussian random variable with mean zero and variance given by $\sigma^2 = \sum_{k=1}^\infty\rho(k)^q$.  This result was proved by Breuer and Major in \cite{BreuerMajor}.  In particular, if $B^H$ is a fBm, then the sequence $\{ Z_{j,n}, 0\le j \le n-1\}$ defined by
\[ Z_{j,n} = n^H\left(B^H_{\frac{j+1}{n}}-B^H_{\frac jn}\right)\]
is a stationary sequence with unit variance.   As a consequence,  $H < 1-\frac 1q,$   we have that
\[ \frac{1}{\sqrt n} \sum_{j=0}^{n-1} H_q\left(n^H\left(B^H_{\frac{j+1}{n}}-B^H_{\frac jn}\right)\right)\]
converges to a normal law with variance given by
\beq{Variance_fBm} \sigma_q^2 =  \frac{q!}{2^q} \sum_{m\in{\mathbb Z}} \left( |m+1|^{2H}-2|m|^{2H}+|m-1|^{2H}\right)^q.\eeq
See \cite{BreuerMajor} and Theorem 7.4.1 of \cite{NoP11}.

The above Breuer-Major theorem can not be applied to our process because $X$ is not necessarily stationary.  However, we have a comparable result.  

\begin{theorem}\label{thm2} Let $q \ge 2$ be an integer and fix a real $T>0$.  Suppose that $\alpha  < 2-\frac 1q$.   For $t\in [0,T]$, define,  
\[ F_n(t) = n^{-\frac{1}{2}}\sum_{j=0}^{\Nt -1}  H_q\left( \frac{\DXj}{\left\|\DXj \right\|_{L^2(\Omega)}}\right),
\]
where $H_q(x)$ denotes the $q$th Hermite polynomial.  Then as $n\to\infty$,  the stochastic process $\{F_n(t), t\in [0,T]\}$ converges in law in the Skorohod space $D([0,T])$,  to a scaled Brownian motion $\{\sigma B_t, t\in [0,T]\}$, where $\{B_t, t\in [0,T]\} $ is a standard Brownian motion and $\sigma=\sqrt{\sigma^2}$ is given by
\beq{sigma}
\sigma^2 =  
\frac{q!}{2^q}\sum_{m\in\mathbb{Z}}\left( |m+1|^\alpha - 2|m|^\alpha+|m-1|^\alpha\right)^q.\eeq

\end{theorem}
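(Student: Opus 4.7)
The plan is to use the decomposition in law from Theorem~\ref{thm1}, writing $X_t \stackrel{d}{=} \sqrt{\kappa}\,B^{\alpha/2}_t + Y_t$ with $Y$ having $C^\infty$ trajectories on $(0,\infty)$, and to reduce the result to the functional Breuer--Major theorem for fBm with Hurst parameter $\alpha/2$. On the grid $\{j/n\}$, for $j\geq 1$ the increments of $Y$ are asymptotically smaller than those of $B^{\alpha/2}$, so $\DXj$ is well approximated by $\sqrt{\kappa}\,\Delta B^{\alpha/2}_{j/n}$ and the Hermite variation of $X$ should inherit its limit from that of $B^{\alpha/2}$. The hypothesis $\alpha<2-1/q$ is precisely what Breuer--Major requires for fBm of index $\alpha/2$, since the increment covariances $\rho_\alpha(k)=\tfrac{1}{2}(|k+1|^\alpha-2|k|^\alpha+|k-1|^\alpha)$ decay like $|k|^{\alpha-2}$, making $\sum_k|\rho_\alpha(k)|^q$ summable iff $q(2-\alpha)>1$.

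\textbf{Key steps.} First, estimate increment sizes. Differentiating the covariance of $Y$ given in Theorem~\ref{thm1} under the integral sign yields $\mathbb{E}[(Y'(s))^2]=O(s^{\alpha-2})$, hence by Cauchy--Schwarz $\|\Delta Y_{j/n}\|_{L^2(\Omega)}^2\leq C\,n^{-\alpha}j^{\alpha-2}$ for $j\geq 1$, while $\|\Delta B^{\alpha/2}_{j/n}\|_{L^2(\Omega)}^2=n^{-\alpha}$; together these give $\|\DXj\|_{L^2(\Omega)}^2=\kappa\,n^{-\alpha}(1+\varepsilon_{j,n})$ with $\sup_{j\geq 1}|\varepsilon_{j,n}|\to 0$. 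Next, set $\widetilde F_n(t)=n^{-1/2}\sum_{j=0}^{\Nt-1}H_q\!\bigl(n^{\alpha/2}\Delta B^{\alpha/2}_{j/n}\bigr)$ and compare with $F_n(t)$. Decompose the normalized $\DXj$ as $a_{j,n} W_{j,n} + b_{j,n} V_{j,n}$, where $W_{j,n}$ is the normalized fBm increment, $V_{j,n}$ is a unit-variance Gaussian orthogonal to $W_{j,n}$, and $a_{j,n}^2+b_{j,n}^2=1$; the Hermite addition formula $H_q(aW+bV)=\sum_{k=0}^{q}\binom{q}{k}a^k b^{q-k} H_k(W)H_{q-k}(V)$ then expresses $H_q(G_{j,n})$ as $H_q(W_{j,n})$ plus correction terms whose size is controlled by $b_{j,n}$ and $\varepsilon_{j,n}$. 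The $L^2$ bounds above yield $\mathbb{E}|F_n(t)-\widetilde F_n(t)|^2\to 0$ uniformly in $t\in[0,T]$: an initial block $j\leq j_0(n)$ with $j_0(n)=o(\sqrt n)$ contributes $O(j_0(n)/\sqrt n)\to 0$ via a crude hypercontractivity bound, and the bulk is controlled by $b_{j,n}^2\leq C j^{\alpha-2}$, which is summable as $\alpha<2$. Finally, apply the functional Breuer--Major theorem to $\widetilde F_n$: finite-dimensional convergence is classical (\cite{BreuerMajor}, \cite[Thm.~7.4.1]{NoP11}), and tightness in $D([0,T])$ follows from a fourth-moment bound $\mathbb{E}|\widetilde F_n(t)-\widetilde F_n(s)|^4\leq C\bigl((\Nt-\Ns)/n\bigr)^2$ obtained via the diagram formula for products of Hermite polynomials together with summability of $|\rho_\alpha(\cdot)|^q$. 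Slutsky's theorem then gives $F_n\to \sigma B$ in $D([0,T])$ with $\sigma^2$ as in~\eqref{sigma}.

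\textbf{Main obstacle.} The delicate step is the uniform $L^2$ comparison $F_n\approx\widetilde F_n$. Two difficulties arise. First, Theorem~\ref{thm1} only asserts equality in law, not independence of $B^{\alpha/2}$ and $Y$, so their cross-covariance must be carried through the orthogonal decomposition of $\DXj$. Second, the regularity of $Y$ degenerates at the origin: from the representation $Y_t=\sqrt{\lambda_1}\int_0^\infty y^{-(\alpha+1)/2}(1-e^{-yt})\,dW_y$ one computes $\|Y'(t)\|_{L^2(\Omega)}^2\sim t^{\alpha-2}$, which blows up as $t\downarrow 0$, so near $j=0$ the increments of $Y$ are of the same order as those of $B^{\alpha/2}$ and no termwise comparison of Hermite polynomials is possible there. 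The remedy is to discard the slowly growing initial block, whose total contribution is absorbed by the $n^{-1/2}$ normalization, and then to exploit the scale separation $j^{\alpha-2}$ on the bulk.
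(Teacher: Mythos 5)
Your overall strategy coincides with the paper's: decompose $X$ into a scaled fBm of Hurst index $\alpha/2$ plus the regular process $Y$, show the contributions involving $Y$ vanish in $L^2$, and invoke the (functional) Breuer--Major theorem for the fBm part; your Hermite addition formula is just the elementary form of the paper's expansion of the multiple integral $I^X_q$ into products $I^W_r\,I^Y_{q-r}$, and your diagonal estimates $\|\Delta Y_{j/n}\|^2_{L^2(\Omega)}\le Cn^{-\alpha}j^{\alpha-2}$ and $\beta_{j,n}^2=\kappa n^{-\alpha}(1+O(j^{\alpha-2}))$ match Lemma \ref{Lem4_1} and \req{beta}.

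There is, however, a genuine gap at the step you yourself flag as delicate. Since the decomposition of Theorem \ref{thm1} does not make $B^{\alpha/2}$ and $Y$ independent, the second moment of your correction terms is a double sum over pairs $j\ne k$ whose summands contain the cross-covariances $\mathbb{E}\bigl[\Delta B^{\alpha/2}_{j/n}\Delta Y_{k/n}\bigr]$ (they enter through $\mathbb{E}[W_{j,n}V_{k,n}]$ in the diagram expansion of $\mathbb{E}[H_r(W_{j,n})H_{q-r}(V_{j,n})H_r(W_{k,n})H_{q-r}(V_{k,n})]$), and you supply no bound for them beyond the diagonal quantities $b_{j,n}$ and $\varepsilon_{j,n}$. (As an aside, $\sum_j j^{\alpha-2}$ converges only for $\alpha<1$, not for all $\alpha<2$ as you assert; the diagonal terms still survive the $n^{-1}$ normalization, but they say nothing about the off-diagonal ones.) If you try to close the off-diagonal estimate by Cauchy--Schwarz from the diagonal, i.e.\ $\bigl|\mathbb{E}[\Delta B^{\alpha/2}_{j/n}\Delta Y_{k/n}]\bigr|\le Cn^{-\alpha}k^{(\alpha-2)/2}$, then already for $q=2$ the worst correction term is of order $n^{-1}\sum_{j,k\le n}|j-k|^{\alpha-2}k^{(\alpha-2)/2}\sim n^{3\alpha/2-2}$, which diverges for $\alpha\in(4/3,3/2)$, well inside the admissible range $\alpha<2-\frac{1}{q}$. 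What is needed is a joint decay in $(j,k)$, namely the paper's Lemma \ref{Lem4_2}, $\bigl|\mathbb{E}[\Delta W_{j/n}\Delta Y_{k/n}]\bigr|\le Cn^{-\alpha}j^{2H-2}k^{-\gamma}$ (note $2H-2-\gamma=\alpha-2$), which is proved by a two-term Taylor analysis of the scaling function $\psi(x)=\mathbb{E}[U_xY_1]$ and its first two derivatives. This estimate is the technical heart of the proof; without it, or an equivalent off-diagonal bound, your $L^2$ comparison $F_n\approx\widetilde F_n$ does not close.
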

The proof of this theorem is given in Section 4.

\section{Preliminaries}
\subsection{Analysis on the Wiener space}
The reader may refer to \cite{NoP11, Nualart} for a detailed coverage of this topic. Let $Z = \{ Z(h), h\in\cal{H}\}$ be an {\em isonormal Gaussian process} on a probability space $( \Omega, {\cal F}, P )$,  indexed by a real separable Hilbert space $\cal{H}$.  This means that $Z$ is a family of Gaussian random variables such that ${\mathbb E}[ Z(h)] =0$ and ${\mathbb E}\left[Z(h)Z(g)\right] = \left< h,g\right>_{\cal{H}}$ for all $h,g \in\cal{H}$.
  
For integers $q \ge 1$, let ${\cal H}^{\otimes q}$ denote the $q$th tensor product of ${\cal H}$, and ${\cal H}^{\odot q}$ denote the subspace of symmetric elements of ${\cal H}^{\otimes q}$.

Let $\{ e_n, n\ge 1\}$ be a complete orthormal system in ${\cal H}$.  For elements $f, g \in {\cal H}^{\odot q}$ and $p\in\{0, \dots, q\}$, we define the $p$th-order contraction of $f$ and $g$ as that element of ${\cal H}^{\otimes 2(q-p)}$ given by
\beq{contract} f\otimes_p g = \sum_{i_1, \dots , i_p=1}^\infty \left< f, e_{i_1} \otimes \cdots\otimes e_{i_p}\right>_{{\cal H}^{\otimes p}} \otimes \left< g, e_{i_1} \otimes \cdots\otimes e_{i_p}\right>_{{\cal H}^{\otimes p}}, \eeq
where $f\otimes_0 g = f\otimes g$. Note that, if $f,g \in {\cal H}^{\odot q}$, then  $f\otimes_q g = \left< f,g\right>_{{\cal H}^{\odot q}}$.  In particular, if $f,g$ are real-valued functions in ${\cal H}^{\otimes q} = L^2({\mathbb R}^2, {\cal B}^2, \mu)$ for a non-atomic measure $\mu$, then we have 
\beq{contract_integrl} f \otimes_1 g = \int_{\mathbb{R}} f(s, t_1) g(s, t_2)~\mu(ds).\eeq

Let ${\cal H}_q$ be the $q$th Wiener chaos of $Z$, that is, the closed linear subspace of $L^2(\Omega)$ generated by the random variables $\{ H_q(Z(h)), h \in {\cal H}, \|h \|_{\cal H} = 1 \}$, where $H_q(x)$ is the $q$th Hermite polynomial.  It can be shown (see \cite[Proposition 2.2.1]{NoP11}) that if $Z, Y \sim N(0,1)$ are jointly Gaussian, then
\beq{Herm_cov} {\mathbb E}\left[ H_p(Z) H_q(Y)\right] = \begin{cases}p!\left({\mathbb E}\left[ ZY\right]\right)^p & if\; p=q\\0&  {\rm otherwise}\end{cases}.\eeq
For $q \ge 1$, it is known that the map 
\beq{Hmap} I_q(h^{\otimes q}) = H_q(Z(h))\eeq
provides a linear isometry between ${\cal H}^{\odot q}$ (equipped with the modified norm $\sqrt{q!}\| \cdot\|_{{\cal H}^{\otimes q}}$) and ${\cal H}_q$, where $I_q(\cdot)$ is the generalized Wiener-It\^o stochastic integral (see \cite[Theorem 2.7.7]{NoP11}).  By convention, ${\cal H}_0 = \mathbb{R}$ and $I_0(x) = x$.  

We use the following integral multiplication theorem from \cite[Proposition 1.1.3]{Nualart}.  Suppose $f\in {\cal H}^{\odot p}$ and $g\in {\cal H}^{\odot q}$.  Then
\beq{int_mult}
I_p(f)I_q(g) = \sum_{r=0}^{p\wedge q} r! \binom pr \binom qr I_{p+q-2r} (f \widetilde{\otimes}_r g) ,\eeq
where $  f \widetilde{\otimes}_r g$ denotes the symmetrization of  $f \otimes_r g$.
For a product of more than two integrals, see Peccati and Taqqu \cite{PTaq}.

\medskip
\subsection{Stochastic integration and fBm}
We refer to the `time domain' and `spectral domain' representations of fBm.  The reader may refer to \cite{PipTaq, SamoTaq} for details.  Let $\cal E$ denote the set of real-valued step functions on $\mathbb{R}$.  Let $B^H$ denote fBm with Hurst parameter $H$.  For this case, we view $B^H$ as an isonormal Gaussian process on the Hilbert space $\hten$, which is the closure of $\cal E$ with respect to the inner product $\left< f, g\right>_\hten = \mathbb{E}\left[I(f) I(g)\right]$.  Consider also the inner product space
\[ \tilde{\Lambda}_H = \left\{ f: f\in L^2(\mathbb{R}), \int_{\mathbb{R}} |{\cal F}f(\xi)|^2|\xi|^{1-2H}d\xi < \infty\right\},\]
where ${\cal F}f = \int_{\mathbb{R}} f(x) e^{i\xi x}dx$ is the Fourier transform, and the inner product of $\tilde{\Lambda}_H$ is given by
\beq{spectral}
\left< f, g\right>_{\tilde{\Lambda}_H} = \frac{1}{C_H^2} \int_{\mathbb{R}} {\cal F}f(\xi)\overline{{\cal F}g(\xi)}|\xi|^{1-2H}d\xi,\eeq
where $C_H = \left(\frac{2\pi}{\Gamma(2H+1)\sin(\pi H)}\right)^{\frac 12}$.  It is known  (see \cite[Theorem 3.1]{PipTaq}) that the space $\tilde{\Lambda}_H$ is isometric to a subspace of $\hten$, and $\tilde{\Lambda}_H$ contains $\cal E$ as a dense subset.  This inner product \req{spectral} is known as the `spectral measure' of fBm.  In the case $H \in (\frac 12, 1)$, there is another isometry from the space
\[ \left| \Lambda_H\right| = \left\{ f:  \int_0^\infty\int_0^\infty |f(u)||f(v)| |u-v|^{2H-2}du~dv < \infty\right\}\]
to a subspace of $\hten$, where the inner product is defined as
\[ \left< f, g\right>_{|\Lambda_H|} = H(2H-1)\int_0^\infty \int_0^\infty f(u)g(v)|u-v|^{2H-2}du~dv,\]
see \cite{PipTaq} or \cite[Section 5.1]{Nualart}.

\section{Proof of Theorem \ref{thm1}}

 For any $\gamma >0$ and $\lambda >0$,  we can write
 \[
  \lambda^{-\gamma} = \frac{1}{\Gamma(\gamma)}\int_0^\infty y^{\gamma-1}e^{-\lambda y}dy,
  \]
where $\Gamma$ is the Gamma function defined by
$\Gamma(\gamma) = \int_0^\infty y^{\gamma-1}e^{-y}dy$.    As a consequence, the covariance (\ref{eq2}) can be written as
\begin{equation} \label{Zcov}
\mathbb{E} [Z_sZ_t] =  \frac{1}{\Gamma(\gamma)}\int_0^\infty y^{\gamma-1}e^{-(t+s) y}dy.
\end{equation}
Notice that this representation implies the covariance \req{eq2} is positive definite.  Taking first the expectation with respect to the process $Z$,  and using formula (\ref{Zcov}), we obtain
\begin{eqnarray*}
R(s,t) &=&   \frac{1}{\Gamma(\gamma)}\int_0^\infty  
\mathbb{E} \left[  \left( \int_0^t e^{yu} dB^H_u \right) \left( \int_0^t e^{yu} dB^H_u \right)  \right] y^{\gamma-1}e^{-(t+s) y}dy \\
&=& \frac{1}{\Gamma(\gamma)}\int_0^\infty  
 \left \langle    e^{yu}\mathbf{1}_{[0,t]}(u),  e^{yv}\mathbf{1}_{[0,s]}(v)   \right\rangle _\hten  y^{\gamma-1}e^{-(t+s) y}dy.
\end{eqnarray*}
Using the isometry between $\tilde{\Lambda}_H$ and a subspace of $\hten$ (see section 2.2), we can write
\begin{eqnarray*}
\left \langle    e^{yu}\mathbf{1}_{[0,t]}(u),  e^{yv}\mathbf{1}_{[0,s]}(v)   \right\rangle _\hten 
&=&  C_H^{-2} \int_{\mathbb{R}} |\xi|^{1-2H}({\cal F}{\mathbf{1}_{[0,t]}e^{y\cdot}})(\overline{{\cal F}{\mathbf{1}_{[0,s]}e^{y\cdot}}})~d\xi\\
&=& C_H^{-2} \int_{\mathbb{R}} \frac{|\xi|^{1-2H}}{y^2+\xi^2}\left( e^{yt+i\xi t}-1\right)\left(e^{ys - i\xi s}-1\right)~d\xi,
\end{eqnarray*} 
where $({\cal F}{\mathbf{1}_{[0,t]}e^{x\cdot}})$ denotes the Fourier transform and 
$C_H = \left(\frac{2\pi}{\Gamma(2H+1)\sin(\pi H)}\right)^{\frac 12}$.
  This allows us to write, making  the  change of variable $\xi = \eta y$,
\begin{eqnarray}  \notag
R(s,t) &=&  \frac 1{ \Gamma(\gamma) C_H^{2}}  \int_0^\infty\int_{\mathbb{R}} y^{\gamma-1}\frac{|\xi|^{1-2H}}{y^2+\xi^2}\left( e^{i\xi t}-e^{-yt}\right)\left(e^{-i\xi s}-e^{-ys}\right)~d\xi~dy\\
&=& \frac 1{ \Gamma(\gamma) C_H^{2}}   \int_0^\infty\int_{\mathbb{R}} y^{-\alpha-1} \frac{|\eta| ^{1-2H}}{1+\eta^2}\left( e^{i\eta yt}-e^{-yt}\right)\left( e^{-i\eta y s}-e^{-ys}\right)~d\eta~dy,  \label{SpecR}
\end{eqnarray} 
where   $\alpha = 2H-\gamma$. 
By Euler's identity,  adding and subtracting $1$ to compensate the singularity of  $y^{-\alpha-1}$ at the origin, we can write
\begin{equation}  \label{eq3}
e^{i \eta y t} - e^{-yt} = (\cos(\eta y t) -1 + i\sin(\eta y t)) + (1-e^{-yt}).
\end{equation}
Substituting (\ref{eq3}) into (\ref{SpecR}) and taking into account that the integral of the imaginary part vanishes because it is an odd function, we obtain
\begin{eqnarray*}
R(s,t)&=&  \frac 2{ \Gamma(\gamma) C_H^{2}}   \int_0^\infty\int_0^\infty  y^{-\alpha-1} \frac{\eta ^{1-2H}}{1+\eta^2}\Big((1-\cos(\eta y t)) (1-\cos(\eta y s)) \\
&&+\sin(\eta y t )\sin(\eta y s)+(\cos(\eta y s)-1)(1-e^{-yt})+ (\cos(\eta y t)-1)(1-e^{-ys})\\
&& +(1-e^{-yt})(1-e^{-ys})\Big)~d\eta~dy. \label{eq4}
\end{eqnarray*}

Let  $B^{(j)} = \{ B^{(j)}(\eta, t), \eta\ge 0, t\ge 0\}$, $j=1,2$ denote two independent Brownian sheets.  That is,  for $j=1,2$, $B^{(j)}$ is a continuous Gaussian field with mean zero and covariance given by
\[
\mathbb{E}\left[ B^{(j)}(\eta, t) B^{(j)}(\xi, s)\right] = \min(\eta , \xi)\times \min(t, s).
\] 
 We define the the following stochastic processes:
\begin{align}\label{UVY}
U_t = \frac  {\sqrt{2}}  {\sqrt{\Gamma(\gamma)} C_H} &\int_0^\infty \int_0^\infty y^{-\frac{\alpha}{2}- \frac 12} \sqrt{\frac{\eta^{1-2H}}{1+\eta^2}} \left( \cos(\eta y t)-1\right) B^{(1)}(d\eta, dy),\\
V_t =\frac  {\sqrt{2}}  {\sqrt{\Gamma(\gamma)} C_H} &\int_0^\infty \int_0^\infty y^{-\frac{\alpha}{2}- \frac 12} \sqrt{\frac{\eta^{1-2H}}{1+\eta^2}} \left( \sin(\eta y t)\right) B^{(2)}(d\eta, dy),\\
Y_t =\frac  {\sqrt{2}}  {\sqrt{\Gamma(\gamma)} C_H} &\int_0^\infty \int_0^\infty y^{-\frac{\alpha}{2}- \frac 12} \sqrt{\frac{\eta^{1-2H}}{1+\eta^2}} \left( 1-e^{-yt}\right) B^{(1)}(d\eta, dy),
\end{align}
where the integrals are  Wiener-It\^o integrals with respect to  the Brownian sheet.  We then define the stochastic process $X = \{ X_t, t\ge 0\}$ by $X_t = U_t + V_t + Y_t$, and we have $\mathbb{E}\left[ X_s X_t\right] = R(s,t)$ as given in \req{SpecR}.    These processes have the following properties:

\medskip
\noindent
(I) The process  $W_t=U_t+V_t $ is a fractional Brownian motion with Hurst parameter $\frac{\alpha}{2}$ scaled with the constant $\sqrt{\kappa}$. In fact, the covariance of this process is
\begin{eqnarray*}
\mathbb{E} [W_tW_s]&=& \frac 2{ \Gamma(\gamma) C_H^{2}}  \int_0^\infty \int_0^\infty y^{- \alpha -1}  \frac{\eta^{1-2H}}{1+\eta^2} \Big( (\cos(\eta y t)-1) (\cos(\eta y s)-1)\\
&& + \sin(\eta y t)\sin(\eta y s)\Big)  d\eta dy\\
&=& \frac 1{ \Gamma(\gamma) C_H^{2}}  \int_0^\infty \int_{\mathbb{R}} y^{\gamma -1}  \frac{|\xi|^{1-2H}}{y^2+\xi^2} (  e^{i\xi t}-1)(e^{-i\xi s }-1)  d\xi dy.
\end{eqnarray*} 
Integrating in the variable $y$ we finally obtain
\[
\mathbb{E} [W_tW_s]
= \frac {c_1}{ \Gamma(\gamma) C_H^{2}}  \int_{\mathbb{R}}  \frac{(  e^{i\xi t}-1)(e^{-i\xi s }-1)}{|\xi|^{\alpha+1}} d\xi,
\]
where $c_1= \int_0^\infty \frac {z^{\gamma-1}}{1+z^2} dz = \kappa \Gamma(\gamma)$.
Taking into account the 
 Fourier transform representation of fBm (see \cite[page 328]{SamoTaq}), this  
  implies $\kappa^{-\frac 12} W $ is a fractional Brownian motion with Hurst parameter $\frac{\alpha}{2}$.
  
\medskip
\noindent
(II)  The process $Y$ coincides, up to a constant,  with the process  $Y^K$ introduced in (\ref{y}) with  $K= 2\alpha +1$. In fact, the covariance of this process is given by
\beq{CovYY}
\mathbb{E} [Y_tY_s]= \frac {2c_2}{ \Gamma(\gamma) C_H^{2}}  \int_0^\infty y^{- \alpha -1}    ( 1-e^{-yt})( 1-e^{-ys})  dy,
\eeq
where
\[
c_2=   \int_0^\infty  \frac{\eta^{1-2H}}{1+\eta^2} d\eta.
\]

Notice that the process $X$ is self-similar with exponent $\frac{\alpha}{2}$.
 This concludes the proof of Theorem \ref{thm1}.

\section{Proof of Theorem \ref{thm2}}
\medskip
Along the proof,  the symbol $C$ denotes a generic, positive constant, which may change from line to line. The value of $C$ will depend on  parameters of the process and on $T$, but not on the increment width $n^{-1}$.

For integers $n\ge 1$, define a partition of $[0,\infty)$ composed of the intervals $\{[\frac{j}{n},\frac{j+1}{n}), j\ge 0\}$.  For the process $X$ and related processes $U, V, W, Y$ defined in Section 3, we introduce the notation
\[\Delta X_{\frac jn} = X_{\frac{j+1}{n}} - X_{\frac jn}\;\text{and }\, \Delta X_0 = X_{\frac 1n},\]
with corresponding notation for $U, V, W, Y$.  We start the proof of Theorem \ref{thm2} with two technical results about the components of the increments.

\subsection{Preliminary Lemmas}
\begin{lemma}\label{Lem4_1}Using above notation with integers $n \ge 2$ and $j,k\ge 0$, we have
\begin{enumerate}[(a)]
\item $\mathbb{E}\left[ \Delta W_{\frac jn}\Delta W_{\frac kn}  \right] = \frac {\kappa}2 n^{-\alpha} \left( |j-k-1|^\alpha - 2 |j-k|^\alpha+|j-k-1|^\alpha\right)$,  where $\kappa$ is defined in (\ref{kappa}). 
 
\item  For $j+k \ge 1$,\[\left| \mathbb{E}\left[ \Delta Y_{\frac jn}\Delta Y_{\frac kn}\right] \right| \le Cn^{-\alpha}(j+k)^{\alpha-2}\] for a constant $C >0$ that is independent of $j$, $k$ and $n$.

\end{enumerate} \end{lemma}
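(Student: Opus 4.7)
For part (a), the work is essentially already done in Section 3. Property (I) established that $W = U + V$ satisfies $\kappa^{-1/2}W \stackrel{d}{=} B^{\alpha/2}$, a fractional Brownian motion with Hurst parameter $\alpha/2$. I would simply apply the standard increment-covariance identity for fBm, $\mathbb{E}[(B^{H}_{t+h}-B^{H}_{t})(B^{H}_{s+h}-B^{H}_{s})] = \tfrac12(|t-s+h|^{2H}-2|t-s|^{2H}+|t-s-h|^{2H})$, with $H=\alpha/2$ and $h=1/n$, then multiply by $\kappa$. This gives the stated formula (with the obvious understanding that the right-hand side should read $|j-k+1|^{\alpha}-2|j-k|^{\alpha}+|j-k-1|^{\alpha}$).

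For part (b), the plan is to start from the integral representation \eqref{CovYY}, telescope the four covariances that make up $\mathbb{E}[\Delta Y_{j/n}\Delta Y_{k/n}]$, and exploit the identity $(1-e^{-y(j+1)/n})-(1-e^{-yj/n}) = e^{-yj/n}(1-e^{-y/n})$. The four terms collapse to give
\[ \mathbb{E}[\Delta Y_{j/n}\Delta Y_{k/n}] \;=\; \frac{2c_2}{\Gamma(\gamma)C_H^2}\int_0^\infty y^{-\alpha-1}\,e^{-y(j+k)/n}\bigl(1-e^{-y/n}\bigr)^{2}\,dy. \]
The rescaling $u = y/n$ immediately pulls out a factor of $n^{-\alpha}$ and leaves
\[ \mathbb{E}[\Delta Y_{j/n}\Delta Y_{k/n}] \;=\; C\, n^{-\alpha}\int_0^\infty u^{-\alpha-1}\,e^{-u(j+k)}\bigl(1-e^{-u}\bigr)^{2}\,du, \]
so the lemma reduces to proving that the remaining integral is bounded by $C(j+k)^{\alpha-2}$ whenever $j+k\ge 1$.

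I would bound this integral by splitting at $u=1$. On $[0,1]$ use $(1-e^{-u})^{2}\le u^{2}$ and extend the range of integration to $[0,\infty)$; the substitution $v=(j+k)u$ produces the Gamma integral $\Gamma(2-\alpha)(j+k)^{\alpha-2}$, which converges because $\alpha = 2H-\gamma < 2H < 2$. On $[1,\infty)$ use $(1-e^{-u})^{2}\le 1$; the same substitution gives $(j+k)^{\alpha}\int_{j+k}^{\infty}v^{-\alpha-1}e^{-v}\,dv$, and for $j+k\ge 1$ this is at most $(j+k)^{\alpha}e^{-(j+k)}/\alpha$, which decays exponentially and is therefore dominated by $(j+k)^{\alpha-2}$. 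The only constraints needed are $0<\alpha<2$, both of which follow from the standing assumption $0<\gamma<2H<1$ on the parameters, so no real obstacle appears — the main step is simply managing the singularity of $u^{-\alpha-1}$ at the origin by using the vanishing of $(1-e^{-u})^{2}$ there, which is exactly what the $u^{2}$ bound accomplishes.
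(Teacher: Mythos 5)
Your proof is correct and takes essentially the same route as the paper's: part (a) is the standard fBm increment covariance applied to $\kappa^{-1/2}W$ (and you rightly read the stated formula as $|j-k+1|^\alpha-2|j-k|^\alpha+|j-k-1|^\alpha$), and part (b) rescales the integral in \req{CovYY} to pull out $n^{-\alpha}$ and bounds what remains by $\int_0^\infty u^{1-\alpha}e^{-u(j+k)}\,du=\Gamma(2-\alpha)(j+k)^{\alpha-2}$, exactly as the paper does via $e^{-yj}-e^{-y(j+1)}=y\int_0^1 e^{-y(j+u)}\,du$, which is just the integral form of your bound $(1-e^{-u})^2\le u^2$ (so your split at $u=1$ is harmless but unnecessary). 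The only nit is that the standing assumption is $H\in(0,1)$, i.e.\ $2H<2$ rather than $2H<1$, but the needed conclusion $0<\alpha=2H-\gamma<2$ still follows.
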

\begin{proof}
Property (a) is well-known for fractional Brownian motion. For (b), we have from \req{CovYY}:
\begin{align*}
\mathbb{E}\left[ \Delta Y_{\frac jn} \Delta Y_{\frac kn}\right] &=\frac{2c_2}{\Gamma(\gamma)C_H^2 n^\alpha} \int_0^\infty   y^{-\alpha-1} \left( e^{-yj}-e^{-y(j+1)}\right)\left( e^{-yk}-e^{-y(k+1)}\right)dy\\
&= \frac{2c_2}{\Gamma(\gamma)C_H^2 n^\alpha}  \int_0^\infty y^{-\alpha+1}\int_0^1\int_0^1 e^{-y(j+k+u+v)}du~dv~dy.
\end{align*}Note that the above integral is nonnegative, and we can bound this with
\begin{align*}
\left|\mathbb{E}\left[ \Delta Y_{\frac jn} \Delta Y_{\frac kn}\right]\right| &\le Cn^{-\alpha}\int_0^\infty y^{-\alpha+1}e^{-y(j+k)}~dy\\
&= Cn^{-\alpha}(j+k)^{\alpha-2}\int_0^\infty u^{-\alpha+1}e^{-u}du\\
&\le Cn^{-\alpha}(j+k)^{\alpha-2}.
\end{align*}
\end{proof}
 
\begin{lemma}\label{Lem4_2}   For $n\ge 2$ fixed and integers $j,k \ge 1$,
\[\left| \mathbb{E}\left[ \Delta W_{\frac jn}\Delta Y_{\frac kn}\right] \right| \le Cn^{-\alpha}j^{2H-2} k^{-\gamma}
\] for a constant $C >0$ that is independent of $j$, $k$ and $n$.   
\end{lemma}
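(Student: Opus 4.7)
My plan is to reduce the cross-covariance to an integral that decouples the $j$- and $k$-dependence via a Laplace transform. Since $W=U+V$ with $V$ constructed from $B^{(2)}$ independently of $B^{(1)}$, which generates both $U$ and $Y$, one has $\mathbb{E}[\Delta V_{j/n}\Delta Y_{k/n}]=0$, so it suffices to estimate $|\mathbb{E}[\Delta U_{j/n}\Delta Y_{k/n}]|$. Applying the Wiener isometry with respect to $B^{(1)}$ to the kernels in \req{UVY} and rescaling $y\mapsto ny$ to pull out the factor $n^{-\alpha}$ yields
\[
\mathbb{E}[\Delta U_{j/n}\Delta Y_{k/n}]=\frac{2n^{-\alpha}}{\Gamma(\gamma)C_H^2}\int_0^\infty\!\!\int_0^\infty y^{-\alpha-1}\frac{\eta^{1-2H}}{1+\eta^2}\bigl[\cos(\eta y(j+1))-\cos(\eta yj)\bigr]\bigl[e^{-yk}-e^{-y(k+1)}\bigr]\,d\eta\,dy,
\]
and the remaining task is to bound this double integral by $Cj^{2H-2}k^{-\gamma}$.

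Next I would write the two bracketed differences as integrals over unit intervals,
\[
\cos(\eta y(j+1))-\cos(\eta yj)=-\int_j^{j+1}\eta y\sin(\eta yt)\,dt,\qquad e^{-yk}-e^{-y(k+1)}=\int_k^{k+1}ye^{-yr}\,dr,
\]
exchange orders of integration (justified by absolute integrability), and bound uniformly over $(t,r)\in[j,j+1]\times[k,k+1]$ the inner quantity
\[
I(t,r)=\int_0^\infty\!\!\int_0^\infty y^{1-\alpha}\frac{\eta^{2-2H}}{1+\eta^2}\sin(\eta yt)\,e^{-yr}\,d\eta\,dy.
\]
Carrying out the $y$-integral first via the Laplace-transform identity
\[
\int_0^\infty y^{1-\alpha}\sin(\eta yt)\,e^{-yr}\,dy=\Gamma(2-\alpha)(r^2+\eta^2t^2)^{-(2-\alpha)/2}\sin\bigl((2-\alpha)\arctan(\eta t/r)\bigr),
\]
which is valid since $0<\alpha<2$, and then substituting $\xi=\eta t$ in the remaining $\eta$-integral, recasts $I(t,r)$ as the single integral
\[
I(t,r)=\Gamma(2-\alpha)\,t^{2H-1}\int_0^\infty\frac{\xi^{2-2H}}{t^2+\xi^2}(r^2+\xi^2)^{-(2-\alpha)/2}\sin\bigl((2-\alpha)\arctan(\xi/r)\bigr)\,d\xi,
\]
which cleanly separates the $t$-scale (associated with $j$) from the $r$-scale (associated with $k$).

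From here I would use the elementary bounds $|\sin((2-\alpha)\arctan(\xi/r))|\le\min((2-\alpha)\xi/r,1)$ together with $(r^2+\xi^2)^{-(2-\alpha)/2}\le\min(r^{\alpha-2},\xi^{\alpha-2})$, split the $\xi$-range at the critical scales $\xi\sim r$ and $\xi\sim t$, and treat the cases $k\ge j$ and $k<j$ separately. In each resulting sub-region the contribution to $I(t,r)$ is bounded by $Ct^{2H-2}r^{-\gamma}$, which, after integrating over the unit squares, gives the claimed bound. The main obstacle is precisely this piecewise analysis: the $\sin$ factor switches from its linear to its saturated regime at $\xi\sim r$, while the denominator $t^2+\xi^2$ switches at $\xi\sim t$, and one must verify that the contributions from the several resulting sub-regions all fit the product bound $j^{2H-2}k^{-\gamma}$, with the tightest estimates needed when $j\approx k$ and the two critical scales coincide.
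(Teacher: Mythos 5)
Your argument reaches the correct bound by a genuinely different route from the paper's. The paper never invokes a Laplace transform: after the same reduction to $\mathbb{E}[\Delta U_{j/n}\Delta Y_{k/n}]$, it exploits self-similarity to write $\mathbb{E}[U_tY_s]=s^\alpha\psi(t/s)$ with $\psi(x)=\mathbb{E}[U_xY_1]$, proves the derivative bounds $|\psi'(x)|\le Cx^{2H-2}$ and $|\psi''(x)|\le Cx^{2H-3}$ by differentiating under the integral sign after the substitution $\theta=\eta x$ (which moves the $x$-dependence out of the cosine), and then controls the rectangular increment by first and second differences of $\psi$ via the mean value theorem; the factor $k^{-\gamma}$ emerges as $k^{\alpha-2}(j/k)^{2H-2}=k^{-\gamma}j^{2H-2}$. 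Your route instead evaluates the $y$-integral in closed form and reduces everything to a single explicit $\xi$-integral; I checked that the piecewise analysis does close — in each of the regimes $\xi\lesssim\min(t,r)$, $\min(t,r)\lesssim\xi\lesssim\max(t,r)$, $\xi\gtrsim\max(t,r)$, and for both orderings of $t$ and $r$, the contribution to the $\xi$-integral is at most $Ct^{-1}r^{-\gamma}$, which combined with the prefactor $t^{2H-1}$ gives $|I(t,r)|\le Ct^{2H-2}r^{-\gamma}$ — so the plan is sound. One caveat: your appeal to absolute integrability to justify pulling $\int_j^{j+1}dt$ outside the $(\eta,y)$-integral fails when $H\le\frac12$, since differentiating the cosine turns the large-$\eta$ behavior from $\eta^{-1-2H}$ (integrable) into $\eta^{-2H}$ (not integrable). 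The repair is to perform the absolutely convergent $y$-integration first for each fixed $\eta$ — exactly your Laplace-transform step — which restores $\eta^{\alpha-2}$ decay, and only then interchange the $\eta$- and $(t,r)$-integrations. (The paper's own displayed bound $\int_0^\infty\theta^{-2H}(1-\cos(y\theta))\,d\theta$ has the same large-$\theta$ problem for $H\le\frac12$, so neither write-up is fully airtight in that regime.) As for what each approach buys: the paper's is shorter and needs only elementary calculus on the one-variable function $\psi$, while yours yields an explicit kernel from which finer, regime-by-regime information could be read off, at the cost of the case analysis you correctly identify as the main labor.
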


\begin{proof}
From \req{UVY} - (3.6) in the proof of Theorem \ref{thm1}, observe that
\[ \mathbb{E}\left[ \Delta W_{\frac jn} \Delta Y_{\frac kn}\right] = \mathbb{E}\left[ (\Delta U_{\frac jn}+\Delta V_{\frac jn})\Delta Y_{\frac kn}\right] = \mathbb{E}\left[ \Delta U_{\frac jn}\Delta Y_{\frac kn}\right].\]
Assume $s, t >0$.  By self-similarity we can define the covariance function $\psi$  by $\mathbb{E}\left[ U_t Y_s\right] = s^\alpha\mathbb{E}\left[ U_{t/s}Y_1\right] = s^\alpha\psi(t/s)$, where, using the change-of-variable $\theta = \eta x$,
\begin{align*}
\psi(x)&= \int_0^\infty \int_0^\infty y^{-\alpha-1}\frac{\eta^{1-2H}}{1+\eta^2}\left(\cos(y \eta x)-1\right)(1-e^{-y})~d\eta~dy\\
&=\int_0^\infty y^{-\alpha-1}(1-e^{-y}) \int_0^\infty \frac{\theta^{1-2H}x^{2H}}{x^2+\theta^2}\left(\cos(y \theta)-1\right)~d\theta~dy.
\end{align*} 
Then using the fact that
\beq{xtheta} \left|\frac{\theta^{1-2H}x^{2H}}{x^2+\theta^2}\right| \le |\theta^{-2H}|~|x|^{2H-1},\eeq
we see that $|\psi(x)| \le C x^{2H -1}$, and 
\begin{align*}
\psi'(x) &= 2H\int_0^\infty y^{-\alpha-1}(1-e^{-y}) \int_0^\infty \frac{\theta^{1-2H}x^{2H-1}}{x^2+\theta^2}\left(\cos(y \theta)-1\right)~d\theta~dy\\
&\qquad - 2\int_0^\infty y^{-\alpha-1}(1-e^{-y}) \int_0^\infty \frac{\theta^{1-2H}x^{2H+1}}{(x^2+\theta^2)^2}\left(\cos(y \theta)-1\right)~d\theta~dy.
\end{align*}
Using \req{xtheta} and similarly
\beq{xtheta2}\left|\frac{\theta^{1-2H}x^{2H+1}}{(x^2+\theta^2)^2}\right| \le |\theta^{-2H}|~|x|^{2H-2},\eeq
we can write
\[ \left| \psi'(x)\right| \le x^{2H -2}|2H -2| \int_0^\infty y^{-\alpha -1}(1-e^{-y})\int_0^\infty \theta^{-2H}\left(\cos(y \theta) - 1\right)~d\theta~dy\le Cx^{2H -2}.\]
By continuing the computation, we can find that $| \psi''(x)| \le Cx^{2H-3}$.
We have for $j,k\ge 1$,
\begin{align*}
\mathbb{E}\left[ \Delta U_{\frac jn}\Delta Y_{\frac kn}\right] &= n^{-\alpha}(k+1)^\alpha\left( \psi\left(\frac{j+1}{k+1}\right)-\psi\left(\frac{j}{k+1}\right)\right)\\
&\qquad-n^{-\alpha}k^{\alpha}\left( \psi\left(\frac{j+1}{k}\right) - \psi\left(\frac jk\right)\right)\\
&= n^{-\alpha}\left( (k+1)^\alpha-k^{\alpha}\right)\left( \psi\left(\frac{j+1}{k+1}\right)-\psi\left(\frac{j}{k+1}\right)\right)\\
&\quad + n^{-\alpha}k^\alpha\left( \psi\left(\frac{j+1}{k+1}\right)-\psi\left(\frac{j}{k+1}\right)-\psi\left(\frac{j+1}{k}\right) + \psi\left(\frac jk\right)\right).
\end{align*}
With the above bounds on $\psi$ and its derivatives, the first term is bounded by
\begin{multline*}
n^{-\alpha}\left| (k+1)^\alpha-k^{\alpha}\right|\left| \psi\left(\frac{j+1}{k+1}\right)-\psi\left(\frac{j}{k+1}\right)\right|\\ \le \alpha n^{-\alpha}\int_0^1 (k+u)^{\alpha-1}du \int_0^{\frac{1}{k+1}}\left|\psi'\left( \frac{j}{k+1}+v\right)\right|~dv\\
\le Cn^{-\alpha}k^{\alpha-2}\left(\frac jk\right)^{2H-2} \le Cn^{-\alpha}k^{-\gamma}j^{2H -2},
\end{multline*}
and
\begin{multline*}
n^{-\alpha}k^\alpha\left| \psi\left(\frac{j+1}{k+1}\right)-\psi\left(\frac{j}{k+1}\right)-\psi\left(\frac{j+1}{k}\right) + \psi\left(\frac jk\right)\right|\\
=n^{-\alpha}k^\alpha\left| \int_0^{\frac{1}{k+1}} \psi'\left(\frac{j}{k+1}+u\right)~du - \int_0^{\frac{1}{k}} \psi'\left(\frac{j}{k}+u\right)~du\right|\\
\le n^{-\alpha}k^\alpha \int_{\frac{1}{k+1}}^{\frac{1}{k}}\left| \psi'\left( \frac jk +u\right)\right|~du +\int_0^{\frac{1}{k+1}}\int_{\frac{j}{k+1}}^{\frac{j}{k}}\left|\psi''(u+v)\right|~dv~du\\
\le Cn^{-\alpha}k^{\alpha-2}\left(\frac jk\right)^{2H-2} +Cn^{-\alpha}k^{\alpha-3}j\left(\frac jk\right)^{2H -3} 
\le Cn^{-\alpha}k^{-\gamma}j^{2H-2}.
\end{multline*} 
This concludes the proof of the lemma. 
\end{proof}

\subsection{Proof of Theorem \ref{thm2}}
We will make use of the notation $\beta_{j,n}= \left\| \DXj\right\|_{L^2(\Omega)}$.
From Lemma  \ref{Lem4_1} and Lemma \ref{Lem4_2} we have
\[
\beta_{j,n}^2 =\kappa n^{-\alpha} (1+ \theta_{j,n}),
\]
where $|\theta_{j,n} | \le C j^{\alpha -2}$ if $j\ge 1$. Notice that, in the definition of $F_n(t)$, it suffices to consider the sum for $j\ge n_0$ for a fixed  $n_0$. Then,   we can choose $n_0$ in such a way that $   C n_0 ^{\alpha -2}  \le \frac 12  $, which implies
\begin{equation}  \label{beta}
\beta_{j,n}^2 \ge \kappa n^{-\alpha} (1-C j^{\alpha -2})
\end{equation}
 for any $j \ge n_0$.

By \req{Hmap},
\[ 
 \beta_{j,n}^q H_q \left(  \beta_{j,n} ^{-1} \DXj \right) = I^X_q\left( \left(\mathbf{1}_{[\frac jn,\frac{j+1}{n})} \right)^{\otimes q}\right),
\]  
where $I^X_q$ denotes the multiple stochastic integral of order $q$ with respect to the process $X$.
Thus, we can write  
\[ 
F_n(t) = n^{-\frac 12}\sum_{j=n_0}^{\Nt -1}  \beta_{j,n } ^{-q}  I^X_q \left(\mathbf{1}_{\left[ \frac jn, \frac{j+1}{n}\right)}^{\otimes q} \right).
\]
The decomposition $X=W+Y$ leads to
\[
I^X_q \left(\mathbf{1}_{\left[ \frac jn, \frac{j+1}{n}\right)}^{\otimes q} \right)=
\sum_{r=0}^q \binom qr  I^W_r \left(\mathbf{1}_{\left[ \frac jn, \frac{j+1}{n}\right)}^{\otimes r} \right) I^Y_{q-r} \left(\mathbf{1}_{\left[ \frac jn, \frac{j+1}{n}\right)}^{\otimes {q-r}} \right).
\]
We are going to show that the terms with $r=0,\dots, q-1$ do not contribute to the limit. 
Define
\[ G_n(t) = n^{-\frac{1}{2}}\sum_{j=n_0}^{\Nt -1} \beta_{j,n}^{-q}   I^W_q \left(\mathbf{1}_{\left[ \frac jn, \frac{j+1}{n}\right)}^{\otimes q} \right)\]
and
\[ \widetilde{G}_n(t) = n^{-\frac{1}{2}}\sum_{j=n_0}^{\Nt -1} \left\| \Delta W_{j/n} \right\|^{-q}_{L^2(\Omega)}  I^W_q \left(\mathbf{1}_{\left[ \frac jn, \frac{j+1}{n}\right)}^{\otimes q} \right).
\]
Consider the decomposition
\[
F_n(t)= (F_n(t)- G_n(t)) + (G_n(t) - \widetilde{G}_n(t) ) + \widetilde{G}_n(t).
\]
Notice that all these processes vanish at $t=0$. 
We claim that for any $0\le s<t \le T$, we have
\begin{equation} \label{4.4}
 \mathbb{E} [ |F_n(t)- G_n(t)- (F_n(s) -G_n(s)) |^2 ] \le    \frac {    (  \Nt -\Ns  )^\delta} n
\end{equation}
and
\begin{equation} \label{4.5}
 \mathbb{E} [ |G_n(t)- \widetilde{G}_n(t)- (G_n(s) -\widetilde{G}_n(s)) |^2 ] \le   \frac {    (  \Nt -\Ns  )^\delta} n,
\end{equation}
where  $0\le \delta <1$.
By Lemma \ref{Lem4_1}, $\left\| \Delta W_{j/n} \right\|^2_{L^2(\Omega)} = \kappa n^{-\alpha}$ for every $j$.  As a consequence, using  \req{Hmap} we can also write
\[ \widetilde{G}_n(t) = n^{-\frac 12}\sum_{j=n_0}^{\Nt -1} H_q \left(  \kappa^{-\frac 12} n^{\frac{\alpha }{2}} \Delta W_{\frac jn}  \right).\]
Since $\kappa ^{-\frac 12} W$ is a fractional Brownian motion, the Breuer-Major theorem implies  that  the process $\widetilde{G} $ converges in $D([0,T])$ to a scaled Brownian motion $\{ \sigma B_t, t\in [0,T]\}$,  where $\sigma^2$ is given in (\ref{sigma}). By the fact that all the $p$-norms are equivalent on a fixed Wiener chaos, the estimates (\ref{4.4}) and (\ref{4.5}) lead to
\begin{equation} \label{4.6}
 \mathbb{E} [ |F_n(t)- G_n(t)- (F_n(s) -G_n(s)) |^{2p} ] \le \frac {    (  \Nt -\Ns  )^{\delta p}} {n^p}
 \end{equation}
and
\begin{equation} \label{4.7}
 \mathbb{E} [ |G_n(t)- \widetilde{G}_n(t)- (G_n(s) -\widetilde{G}_n(s)) |^{2p} ] \le   \frac {    (  \Nt -\Ns  )^{\delta p}} {n^p},
\end{equation}
for all $p\ge 1$. 
Letting $n$ tend to infinity,   we deduce from (\ref{4.6}) and  (\ref{4.7})   that  the sequences
$F_n -G_n$  and  $G_n -\widetilde{G}_n$ converge  to zero in the topology of  $D([0,T])$, as $n$ tends to infinity.

\medskip
\noindent
{\it Proof of (\ref{4.4}):}  
We can write
\[
\mathbb{E} \left[ |F_n(t) -G_n(t)- (F_n(s) -G_n(s))|^2 \right] \le  C \sum_{r=0}^{q-1}   \mathbb{E} [\Phi_{r,n}^2] ,
\]
where
\[
 \Phi_{r,n}=n^{-\frac 12  }\sum_{j=\Ns \vee n_0}^{\Nt -1}  \beta_{j,n} ^{-q}  I^W_r \left(\mathbf{1}_{\left[ \frac jn, \frac{j+1}{n}\right)}^{\otimes r} \right) I^Y_{q-r} \left(\mathbf{1}_{\left[ \frac jn, \frac{j+1}{n}\right)}^{\otimes {q-r}} \right).
 \]
 We have, using (\ref{beta}),
 \begin{eqnarray*}
 && \mathbb{E} [\Phi_{r,n}^2] \le  n^{-1+q\alpha}  \\
&& \quad \times  \sum_{j,k=\Ns \vee n_0} ^{\Nt -1}  \left|  \mathbb{E} \left[    I^W_r \left(\mathbf{1}_{\left[ \frac jn, \frac{j+1}{n}\right)}^{\otimes r} \right) I^Y_{q-r} \left(\mathbf{1}_{\left[ \frac jn, \frac{j+1}{n}\right)}^{\otimes {q-r}} \right)
  I^W_r \left(\mathbf{1}_{\left[ \frac kn, \frac{k+1}{n}\right)}^{\otimes r} \right) I^Y_{q-r} \left(\mathbf{1}_{\left[ \frac kn, \frac{k+1}{n}\right)}^{\otimes {q-r}} \right) \right] \right|.
  \end{eqnarray*}
Using a diagram method for the expectation of four stochastic integrals (see \cite{PTaq}), we find that, for any $j,k$,  the above expectation consists of a sum of terms of the form
\[
 \left(\mathbb{E}\left[ \Delta W_{\frac jn}\Delta W_{\frac kn}\right]\right)^{a_1}
\left(\mathbb{E}\left[ \Delta Y_{\frac jn}\Delta Y_{\frac kn}\right]\right)^{a_2}
\left(\mathbb{E}\left[ \Delta W_{\frac jn}\Delta Y_{\frac kn}\right]\right)^{a_3}
\left(\mathbb{E}\left[ \Delta Y_{\frac jn}\Delta W_{\frac kn}\right]\right)^{a_4},
\]
where  the $a_i$ are nonnegative integers such that $a_1 +a_2 + a_3 + a_4 = q$, $a_1 \le r \le q-1$, and $a_2 \le q-r$.  First, consider the case with $a_3 = a_4 = 0$, so that we have the sum
\[
n^{-1+q\alpha}\sum_{j,k=\Ns \vee n_0}^{\Nt -1} \left(\mathbb{E}\left[ \Delta W_{\frac jn}\Delta W_{\frac kn}\right]\right)^{a_1}
\left(\mathbb{E}\left[ \Delta Y_{\frac jn}\Delta Y_{\frac kn}\right]\right)^{q-a_1},
\]
where $0 \le a_1  \le q-1$.  Applying Lemma \ref{Lem4_1}, we can control each of the terms  in the above sum by
\[
n^{-q\alpha }(|j-k+1|^\alpha-2|j-k|^\alpha+|j-k-1|^\alpha)^{a_1} (j+k)^{(q-a_1)(\alpha-2)},
\]
which gives
\begin{eqnarray}  \notag
&& n^{-1+q\alpha}\sum_{j,k=\Ns \vee n_0}^{\Nt -1} \left|\mathbb{E}\left[ \Delta W_{\frac jn}\Delta W_{\frac kn}\right]\right|^{a_1}
\left|\mathbb{E}\left[ \Delta Y_{\frac jn}\Delta Y_{\frac kn}\right]\right|^{q-a_1}\\ \notag
&& \qquad\le  Cn^{-1} \left( \sum_{j=\Ns \vee n_0}^{\Nt -1} j^{\alpha -2} +
\sum_{j,k=\Ns \vee n_0, j\not =k}^{\Nt -1}  |j-k| ^{(q-1)(\alpha-2)}(j+k)^{\alpha-2}  \right)\\  \notag
&& \qquad \le   Cn^{-1}  \sum_{j=\Ns \vee n_0}^{\Nt -1}  \left( j^{\alpha -2} + j^{ q(\alpha -2)+1} \right) \\
&& \qquad \le    Cn^{-1} \left(\Nt -\Ns) ^{(\alpha -1) \vee 0} + (\Nt -\Ns) ^{[q(\alpha -2)+ 2]\vee 0} \right).
  \label{tr1}
\end{eqnarray}

Next, we consider the case where $a_3 + a_4 \ge 1$.  By Lemma \ref{Lem4_1}, we have that, up to a constant $C$,
\[ \left|\mathbb{E}\left[ \Delta Y_{\frac jn}\Delta Y_{\frac kn}\right]\right|\le C \left|\mathbb{E}\left[ \Delta W_{\frac jn}\Delta W_{\frac kn}\right]\right|,\]
so we may assume $a_2 =0$, and  have to handle the term 
\begin{equation}\label{UY_YU_AB} n^{-1+q\alpha}\sum_{j,k=\Ns \vee n_0}^{\Nt -1} \left|\mathbb{E}\left[ \Delta W_{\frac jn}\Delta W_{\frac kn}\right]\right|^{q-a_3-a_4}\left|\mathbb{E}\left[ \Delta W_{\frac jn}\Delta Y_{\frac kn}\right]\right|^{a_3}\left|\mathbb{E}\left[ \Delta Y_{\frac jn}\Delta W_{\frac kn}\right]\right|^{a_4} 
\end{equation}
for all allowable values of $a_3, a_4$ with $a_3 + a_4 \ge 1$.
Consider the decomposition
\begin{align*}
n^{-1+q\alpha}&\sum_{j,k=\Ns \vee n_0}^{\Nt -1} \left|\mathbb{E}\left[ \Delta W_{\frac jn}\Delta W_{\frac kn}\right]\right|^{q-a_3-a_4}\left|\mathbb{E}\left[ \Delta W_{\frac jn}\Delta Y_{\frac kn}\right]\right|^{a_3}\left|\mathbb{E}\left[ \Delta Y_{\frac jn}\Delta W_{\frac kn}\right]\right|^{a_4}\\
&\;=n^{q\alpha-1}\sum_{j=\Ns \vee n_0}^{\Nt -1}\left|\mathbb{E}\left[ \Delta W_{\frac jn}^2\right]\right|^{q-a_3-a_4}~\left|\mathbb{E}\left[ \Delta W_{\frac jn}\Delta Y_{\frac jn}\right]\right|^{a_3+a_4}\\
&\;+n^{q\alpha-1}\sum_{j=\Ns \vee n_0}^{\Nt -1}\sum_{k=\Ns \vee n_0}^{j-1}\left|\mathbb{E}\left[ \Delta W_{\frac jn}\Delta W_{\frac kn}\right]\right|^{q-a_3-a_4}\left|\mathbb{E}\left[ \Delta W_{\frac jn}\Delta Y_{\frac kn}\right]\right|^{a_3}\left|\mathbb{E}\left[ \Delta Y_{\frac jn}\Delta W_{\frac kn}\right]\right|^{a_4}\\
&\;+n^{q\alpha-1}\sum_{k=\Ns \vee n_0}^{\Nt -1}\sum_{j=\Ns \vee n_0}^{k-1}\left|\mathbb{E}\left[ \Delta W_{\frac jn}\Delta W_{\frac kn}\right]\right|^{q-a_3-a_4}\left|\mathbb{E}\left[ \Delta W_{\frac jn}\Delta Y_{\frac kn}\right]\right|^{a_3}\left|\mathbb{E}\left[ \Delta Y_{\frac jn}\Delta W_{\frac kn}\right]\right|^{a_4}.
\end{align*}
We have,by Lemma \ref{Lem4_1} and Lemma \ref{Lem4_2},
\begin{align}  \notag 
n^{-1+q\alpha}&\sum_{j,k=\Ns \vee n_0}^{\Nt -1} \left|\mathbb{E}\left[ \Delta W_{\frac jn}\Delta W_{\frac kn}\right]\right|^{q-a_3-a_4}\left|\mathbb{E}\left[ \Delta W_{\frac jn}\Delta Y_{\frac kn}\right]\right|^{a_3}\left|\mathbb{E}\left[ \Delta Y_{\frac jn}\Delta W_{\frac kn}\right]\right|^{a_4}\\  \notag
&\quad \le Cn^{-1}\sum_{j=\Ns \vee n_0}^{\Nt -1}j^{(a_3+a_4)(\alpha-2)}\\ \notag
&\qquad + Cn^{-1}\sum_{j=\Ns \vee n_0}^{\Nt -1} j^{a_3(2H-2)-a_4\gamma}\sum_{k=\Ns \vee n_0}^{j-1}k^{-a_3\gamma+a_4(2H -2)}\left| j-k\right|^{(q-a_3-a_4)(\alpha-2)}\\\notag
&\qquad + Cn^{-1}\sum_{k=\Ns \vee n_0}^{\Nt -1} k^{-a_3\gamma+a_4(2H-2)}\sum_{j=\Ns \vee n_0}^{k-1}j^{a_3(2H-2)-a_4\gamma}\left| k-j\right|^{(q-a_3-a_4)(\alpha-2)}\\ \notag
&\quad    \le Cn^{-1}  \left(  (\Nt-\Ns) ^{[(a_3+a_4)( \alpha -2)+1]\vee 0}+  (\Nt-\Ns) ^{[q(\alpha -2)+2]\vee 0}  \right. \\ \label{tr2}
 &\qquad  \left.+ (\Nt-\Ns) ^{[a_3(2H-2)-a_4\gamma +1]\vee 0}+  (\Nt-\Ns) ^{[a_4(2H-2)-a_3\gamma +1] \vee 0} \right).
\end{align}
Then (\ref{tr1}) and (\ref{tr2}) imply  (\ref{4.4}) because  $\alpha <2-\frac 1q$.

 \medskip
 \noindent
 {\it Proof of (\ref{4.5}):}
 We have
\[ 
G_n(t) - \widetilde{G}_n(t) = n^{-\frac 12}\sum_{j=n_0}^{\Nt -1}\left(  \beta_{j,n}^{-q}  - \left\| \Delta W_{\frac jn} \right\|^{-q}_{L^2(\Omega)}\right) I^W_q\left( \mathbf{1}_{[\frac jn, \frac{j+1}{n})}^{\otimes q}\right)
\]
and we can write, using  (\ref{beta}) for any $j\ge n_0$,
\[
 \left| \beta_{j,n}^{-q}  - \left\| \Delta W_{\frac jn} \right\|^{-q}_{L^2(\Omega)} \right|
 = (\kappa^{-1} n^{\alpha})^{\frac q2} \left|(1+ \theta _{j,n})^{-\frac q2} -1 \right| \le   C\left(\kappa^{-1} n^{\alpha } j^{\alpha -2}\right)^{\frac q2} .
 \]
This leads to the estimate
\begin{eqnarray*}
&&  \mathbb{E}\left[\left| G_n(t) - \widetilde{G}_n(t)- (G_n(s) - \widetilde{G}_n(s))  \right|^2\right]
 \le  C   n^{-1}  \\
&& \qquad  \times  \left(  \sum_{j=\Ns \vee n_0}^{\Nt -1} j^{\alpha -2} +
\sum_{j,k=\Ns \vee n_0, j\not =k}^{\Nt -1}  |j-k| ^{q (\alpha-2)}  \right)\\
 && \le   C   n^{-1} \left(\Nt -\Ns) ^{(\alpha -1) \vee 0} + (\Nt -\Ns) ^{[q(\alpha -2)+ 2]\vee 0} \right)  ,
\end{eqnarray*}
which implies (\ref{4.5}).

This concludes the proof of Theorem \ref{thm2}.

\small


\begin{thebibliography}{7}


\bibitem{Balan}
R. M. Balan and C. A. Tudor (2008).  The stochastic heat equation with fractional-colored noise:  existence of the solution.  {\em Latin Amer. J. Probab. Math. Stat.} 4: 57-87.

%\bibitem{Billingsley}
%P. Billingsley.  {\em Convergence of Probability Measures}, $2^{nd}$ Ed.  New York: Wiley, 1999.

\bibitem{BreuerMajor}
P. Breuer and P. Major (1983). Central limit theorems for nonlinear functionals of Gaussian fields.  
{\em J. Multivariate Analysis} 13(3): 425-441.

%\bibitem{GiSu}
%L. Giraitis and D. Surgailis (1985).  CLT and other limit theorems for functionals of Gaussian processes.  {\em Z. Wahrscheinlichkeitstheorie verw. Gebiete} 70:  191-212. 

\bibitem{HoVi}
C. Houdr\'e and J. Villa (2003). An example of infinite dimensional quasi-helix. {\em Stochastic Models, Contemporary Mathematics}  366, 195-201.


\bibitem{Lei}
P. Lei and D. Nualart (2008).  
A decomposition of the bifractional Brownian motion and some applications.   
{\em Statist. Probab. Lett.} 10.1016.

%\bibitem{NoNu}
%I. Nourdin and D. Nualart (2010).  
%Central limit theorems for multiple Skorokhod integrals.  
%{\em J. Theor. Probab.} 23: 39-64.

\bibitem{NoP11}
I. Nourdin and G. Peccati. {\em  Normal Approximations with Malliavin Calculus: from Stein's Method to Universality}. Cambridge U. Press, 2012.

%\bibitem{NoRev}
%I. Nourdin and A. R\'eveillac (2009).  Asymptotic behavior of weighted quadratic variations of fractional Brownian motion: the critical case $H = 1/4$.  {\em Ann. Probab.} 37:  2200-2230.

\bibitem{Nualart}
D. Nualart.
{\em The Malliavin Calculus and Related Topics}, $2^{nd}$ Ed. Springer, 2006.

%\bibitem{NOrtiz}
%D. Nualart and S. Ortiz-Latorre (2008). Central limit theorems for multiple stochastic integrals and Malliavin calculus.
%{\em Stoch. Proc. Appl.} 118(4): 614-628. MR2394845

%\bibitem{NP05}
%D. Nualart and G. Peccati (2005). Central limit theorems for sequences of multiple stochastic integrals.
%{\em Ann. Probab.} 33(1): 177-193. MR2118863

\bibitem{Ouahhabi}
H. Ouahhabi and C. A. Tudor (2012).  Additive functionals of the solution to the fractional stochastic heat equation.  {\em J. Fourier Anal. Appl.} 19(4): 777-791.

\bibitem{PTaq}
G. Peccati and M. S. Taqqu.  {\em Wiener Chaos:  Moments, Cumulants and Diagrams}.  Berlin:  Springer-Verlag, 2010.

\bibitem{PipTaq}
V. Pipiras and M. S. Taqqu (2000).  Integration questions related to fractional Brownian motion. {\em Probab. Theory Relat. Fields} 118: 251-291.

%\bibitem{RussoTudor06}
%F. Russo and C.A. Tudor (2006).  On bifractional Brownian motion.  {\em Stoch. Proc. Appl.} 116 (5):  830-856.

\bibitem{SamoTaq}
G. Samorodnitsky and M. S. Taqqu.  {\em Stable Non-Gaussian Random Processes:  Stochastic Models with Infinite Variance}.  New York:  Chapman \& Hall, 1994.


\bibitem{TTV}
S. Torres, C. A. Tudor and F. Viens (2014).
Quadratic variations for the fractional-colored stochastic heat equation.
{\em Electron. J. Probab.} 19(76).

\bibitem{Tudor}
C. A. Tudor. {\em Analysis of Variations for Self-Similar Processes: a Stochastic Calculus Approach}.  Berlin:  Springer-Verlag, 2013.

\bibitem{Tu_Xiao}
C. A. Tudor and Y. Xiao (2015).  Sample paths of the solution to the fractional-colored stochastic heat equation.  {\em Preprint}.
\end{thebibliography}
\end{document}